\documentclass[11pt]{article}
\usepackage{hyperref}	
\usepackage{geometry,amsmath,graphicx,color,ulem,mathrsfs,color}        
\usepackage[version=3]{mhchem}
\usepackage{amssymb,amsfonts,amsmath,comment}
\usepackage{amsthm,enumerate}
\usepackage{color,subfig,tikz,pgf}
\usetikzlibrary{snakes}
\usetikzlibrary{arrows} 

\geometry{letterpaper}       

\usepackage{amsthm}
	\newtheorem{theorem}{Theorem}[section]

	\newtheorem{corollary}[theorem]{Corollary}
	\theoremstyle{definition}
		\newtheorem{definition}[theorem]{Definition}
		\newtheorem{example}[theorem]{Example}
		\newtheorem{remark}[theorem]{Remark}
\usepackage{enumitem}

\normalem
\usepackage{mathtools}

%%%%%%%%%%%%%%%%%%%%%%%%%%%%%%
%% OPTIONAL MACRO FILES
%% Insert self-defined macros here.
%% \newcommand definitions are recommended; \def definitions are supported

\newenvironment{enumerate*}[1][{}]{\begin{itemize}}{\end{itemize}}

\newcommand\mc[1]{\mathcal{#1}}

\newcommand{\vv}[1]{{\boldsymbol{#1}}}
\newcommand{\df}[1]{{\bf{\emph{#1}}}}
\newcommand{\rrpp}{\mathbb{R}_{>0}}
\newcommand{\rrp}{\mathbb{R}_{\geq 0}}
\newcommand{\rr}{\mathbb{R}}

\begin{document}

\title{Delay stability of reaction systems}
\author{
        Gheorghe Craciun\thanks{Department of Mathematics and Department of Biomolecular Chemistry, University of Wisconsin-Madison.} 
    \and
        Maya Mincheva\thanks{Department of Mathematical Sciences, Northern Illinois University.} 
    \and
        Casian Pantea\thanks{Department of Mathematics, West Virginia University.} 
    \and
        Polly Y. Yu\thanks{Department of Mathematics, University of Wisconsin-Madison.} 
}

\maketitle

\begin{abstract}
    Delay differential equations are used  as a model when the effect of past states has to be taken into account. In this work we consider delay models of chemical reaction networks with mass action kinetics. We obtain a sufficient condition for absolute delay stability of equilibrium concentrations, i.e., local asymptotic stability independent of the delay parameters. Several interesting examples on sequestration networks with delays are presented.
\end{abstract}

%%%%%%%%%%%%
\section{Introduction}
\label{sec:intro}

Many biochemical processes involve time delays, for example transmission of cellular signal~\cite{Macdonald1989}. Models using ordinary differential equations assume that future behavior of the system depends only on the present time. Taking into account the influence of the past requires the use of time delay. Delay differential equations have found applications in biology~\cite{Macdonald1989,Smith2011}, population dynamics~\cite{Kuang1993}, chemistry~\cite{Epstein1990,Epstein1991,mr96} and physics~\cite{Stepan1989}. In biochemistry, delay models are sometimes used when the full reaction network is not completely known~\cite{Epstein1990, Epstein1991}, or experimental data displays oscillatory behavior~\cite{Smolen2001}. It is employed most often to model gene regulatory networks, where the delays account for transcription and translation times~\cite{Mahaffy1984}.

If all of the time delays are zero, we obtain the system's ordinary differential equations counterpart. Both the delay and the ordinary differential equations systems have the same set of equilibria. The introduction of delays often, but not always, leads to a stable equilibrium being destabilized~\cite{Cooke1982}, where the delay instability is usually accompanied by a Hopf bifurcation and the appearance of oscillations~\cite{Belair1996}. In that respect, we obtain a sufficient condition which precludes the appearance of oscillations in delay models of chemical reaction networks.

Some delay systems of chemical reaction networks, e.g.~complex balanced mass action systems, are always locally asymptotically stable~\cite{liptak2018semistability}. Our work does not impose any stringent condition like complex balanced; instead we consider fully open mass action systems, i.e., there are inflow and outflow reactions for every chemical species~\cite{Craciun&Feinberg}, and time delays appearing only in the production of chemical species~\cite{mr96}. Based on previous work~\cite{HofbauerSo2000} on delay systems, we obtained a delay-independent algebraic condition for linear stability of mass action systems with delays.

This work is organized as follows. In Section~\ref{sec:notation}, we introduce mass action reaction systems with and without delay. In Section~\ref{sec:linear}, we follow the standard approach of linearizing delay systems and provide a sufficient condition for the linear asymptotic stability of a delay mass action system. In Theorem~\ref{prop:main}, we prove that if a modified version of the Jacobian matrix is a $P_0$-matrix, then the delay system is linearly stable. Furthermore, the conditions of the theorem are independent of the delay parameters. Finally, we conclude with several biologically relevant examples and a discussion in Sections~\ref{sec:examples} and \ref{sec:concl} respectively.

%%%%%%%%%%%%
\section{Mass action systems with delay}
\label{sec:notation}

Given a vector $\vv y \in \mathbb{R}^n$, we denote by $\mathrm{supp}(\vv y)$ the set of indices for which $y_i \neq 0$. The cardinality of a set $X$ is denoted $|X|$. Consider the partial order on $\mathbb{R}^n$: if $\vv u$, $\vv v \in \mathbb{R}^n$, then $\vv u \leq \vv v$ if and only if $u_i \leq v_i$ for all $1 \leq i \leq n$. Strict inequality between vectors is similarly defined. Let $\rrp^n$ denote the set of vectors $\vv v \in \mathbb{R}^n$ such that $\vv v \geq \vv 0$. Similarly, $\rrpp^n$ denotes the set of vectors $\vv v \in \mathbb{R}^n$ such that $\vv v > \vv 0$. Given two vectors $\vv  x \in \rrpp^n$ and $\vv y \in \mathbb{R}^n$, we denote by $\vv x^{\vv y}$ the product
	\begin{align*}
		\vv x^{\vv y} = x_1^{y_1} x_2^{y_2} \cdots x_n^{y_n}.
	\end{align*}
\medskip

\begin{definition}
\label{def:CRN}
     A \df{chemical reaction network} $\mathcal{N} = (\mc V, \mc R)$, or \df{reaction network}, is a finite directed graph, where each vertex $\vv y \in \mc V$, called a \df{complex}, is a vector in $\rr_{\geq 0}^n$. Each edge $(\vv y, \vv y') \in \mc R$, called a \df{reaction}, is denoted $\vv y \to \vv y'$. 
\end{definition}

\begin{remark}
The definition above is equivalent to the classical definition of a reaction network being a triple $(\mc S, \mc C, \mc R)$, where $\mc S$ is the set of \emph{species}, $\mc C$ is the set of \emph{complexes} and $\mc R$ is the set of \emph{reactions}~\cite{Banaji_Craciun_2010, Craciun&Feinberg, cf06, feinberg_lecture_notes,  Yu_Craciun_survey}. Indeed, given $\mathcal{N}$ as above, the set of \df{species} is identified (by an abuse of notation) to the standard basis $\{\vv e_1, \vv e_2,\ldots, \vv e_n\}$ of $\rr^n$, and the complexes are non-negative linear combinations of the species. Conversely, given a triple $(\mc S, \mc C, \mc R)$ as described in \cite{Craciun&Feinberg, feinberg_lecture_notes}, we can use the same identification between the set of species and the standard basis of $\rr^n$ to write the complexes as vectors $\vv y \in \rr_{\geq 0}^n$. If $i \in \mathrm{supp}({\vv y})$ then we say that $\vv e_i$ is a species in the complex $\vv y$. 
\end{remark}

For any reaction $\vv y \to\vv  y'$, we call the source vertex $\vv y$ a \df{reactant complex}, and the target vertex $\vv y'$ a \df{product complex}. A species in $\vv y$ is a \df{reactant species} of the reaction $\vv y \to\vv  y'$, and a species in $\vv y'$ is a \df{product species} of the reaction. In other words, $\mathrm{supp}(\vv y)$ consists of the reactant species while $\mathrm{supp}(\vv y')$ consists of the product species.

A reaction of the form $A \to 0$ is  an \df{outflow reaction}, and a reaction of the type $0 \to A$ is an \df{inflow reaction}. A reaction $\vv y \to \vv y'$ is \df{autocatalytic} if $\mathrm{supp}(\vv y) \cap \mathrm{supp}(\vv y') \neq \emptyset$ and for every $i \in \mathrm{supp}(\vv y) \cap \mathrm{supp}(\vv y')$, we have $y_i' > y_i $. Thus for a \df{non-autocatalytic network}, i.e., one with no autocatalytic reactions, we have $y_i' \leq y_i$ for all reaction $\vv y\to \vv y'$ and any $i \in \mathrm{supp}(\vv y) \cap \mathrm{supp}(\vv y')$.

A model comes with a reaction network, by assuming that each reaction proceeds at a certain rate. A \emph{kinetics} for a reaction network $\mathcal{N} = (\mc V, \mc R)$ is an assignment of a rate function $\mathscr{K}_{\vv y\to\vv  y'}: \rrpp^{n} \to \rrpp$ to each reaction $\vv y \to \vv y' \in \mc R$. One of the most common models in the literature for chemistry and biochemistry is that of \emph{mass action kinetics}~\cite{Guldberg1864, HornJackson1972}. Although we briefly mention results for a general class of kinetics, in this paper the focus is mass action. Mass action kinetics assumes that the rate at which a reaction $\vv y \to \vv y'$ proceeds is proportional to the concentrations of the reactant species, i.e., at rate $\mathscr{K}_{\vv y \to \vv y'}(\vv x) = k_{\vv y \to  \vv y'} \vv x^{\vv y}$, where $k_{\vv y \to \vv y'} > 0$ is a positive rate constant and  $\vv  x = \vv  x(t) \in \rrpp^{n}$ is the (time-dependent) vector of concentrations.

\begin{definition}
	A \df{mass action system} $\mc N_{\vv k}$ is a reaction network $\mc N = (\mc V, \mc R)$ together with a vector of positive rate constants $\vv k \in \rrpp^{\mc R}$.  The dynamics of the concentration vector $\vv x(t)$ is given by
	\begin{align}
	\label{eq:gode-mak}
		\dot{\vv x}(t) &= \sum_{\vv y\to \vv y' \in \mc R} k_{\vv y\to \vv y'} [\vv x(t)]^{\vv y} (\vv y'-\vv y).
	\end{align}
\end{definition}

There are chemical systems where the products are not immediately produced though the reactants are consumed. For example, biopolymer processes that involve a nucleation-propagation mechanism, like the binding of two single-strand DNA molecules $2S \to D$. In a model of this process, the consumption of $S$ happens with no delay, but $D$ becomes available after a time delay  $ \tau > 0$. In an upcoming work, we show that this model of duplex formation is delay stable~\cite{our_follow_up_paper_SR_graph}.

In a delay model for realistic chemical and biochemical systems, \emph{the delay terms only affect product formation}. This is consistent with the physical intuition that reactant species are generally consumed immediately, while the product species are available only at a later time. While different products of the same reaction may be available after two different delays, for simplicity of notation, we assign at most one delay parameter to each reaction. As we note later, our main result Theorem~\ref{prop:main} does not depend on this assumption. 

Let $\vv \tau = (\tau_{\vv y \to \vv y'})_{\vv y \to \vv y' \in \mathcal{R}} \in \rrp^{\mathcal R}$ be a vector of time delays for the reactions in $\mathcal{N}$. If $\tau_{\vv y\to \vv y'} = 0$, we say the reaction $\vv y \to \vv y'$ occurs without delay. Note that inflow and outflow reactions always occur without delay. In the case when a reaction is delayed, the rate function for product formation is evaluated at a shifted time $ k_{\vv y\to \vv y'} [\vv x(t-\tau_{\vv y\to \vv y'})]^{\vv y}$. These considerations lead us to the definition of a delay mass-action system, as introduced by Roussel in \cite{mr96}.

\begin{definition}
A \df{delay mass action system} $\mc N_{\vv \tau,\vv  k}$ is a mass action system $\mathcal{N}_{\vv k}$ with a vector of delays $\vv\tau \in \rrp^{\mathcal R}$. The dynamics of the concentration vector $\vv x(t)$ is given by 
\begin{align}
	\label{eq:gode-delay}
		\dot{\vv x}(t) &= \sum_{\vv  y\to \vv y' \in \mc R} k_{\vv y\to \vv y'} [\vv x(t- \tau_{\vv y\to \vv y'})]^{\vv y} \vv y' - \sum_{\vv y\to \vv y' \in \mc R}
		 k_{\vv y\to \vv y'} [\vv x(t)]^{\vv y} \vv y.
	\end{align}
\end{definition}

For an initial value problem, the initial data has to be specified on the interval $[-\bar{\tau}, 0]$ where $\bar{\tau} = \max_{\vv y \to \vv y'} \tau_{\vv y\to\vv  y'}$. If all reactions occur without delay, i.e., $\vv \tau=\vv 0$, then from the perspective of dynamics, $\mathcal{N}_{\vv \tau, \vv k}$ is not different from $\mathcal{N}_{\vv k}$~\cite{Banks_2013}. Indeed, the ODE system (\ref{eq:gode-mak}) is identical to the delay system  (\ref{eq:gode-delay}) when $\vv\tau = \vv 0$.

It is well-known that the ODE system (\ref{eq:gode-mak}) has only non-negative solutions if the initial condition is non-negative~\cite{liptak2018semistability}. The first quadrant for a delay system such as (\ref{eq:gode-delay}) is also forward invariant~\cite{Bodnar2000}. The systems (\ref{eq:gode-mak}) and (\ref{eq:gode-delay}) share the same set of positive steady states~\cite{liptak2018semistability}. In other words, a positive constant solution $\vv x(t) \equiv \vv x^*$ is a steady state for the delay system (\ref{eq:gode-delay}) if and only if it is a steady state of the ODE system (\ref{eq:gode-mak}). We call a positive steady state $\vv x^*$ an \df{equilibrium}. 

\begin{remark}
In general, the associated ODE equations \eqref{eq:gode-mak} of a mass-action system, with positive initial data $\vv \theta \in \rrpp^n$, may have a conservation relation
    \begin{align*}
        \vv x(t) - \vv \theta \in S, 
    \end{align*}
where $S = \mathrm{span} \{ \vv y' - \vv y : \vv y \to \vv y' \in \mc R\}$ is the \df{stoichiometric subspace}.\footnote{Generally we say the system has such a conservation relation if $S \subsetneq \rr^n$.}  
Similarly, the delay equations \eqref{eq:gode-delay} of a delay mass-action system, with continuous and positive initial data $\vv \theta$ defined on the interval $[-\bar{\tau},0]$, may admit a conservation relation~\cite{liptak2018semistability}
    \begin{align*} 
        \vv x(t) - \boldsymbol{\theta}(0) + \sum_{\vv y \to \vv y' \in \mc R} k_{\vv y \to \vv y'}  \left(  \int_{t-\tau_{\vv y \to \vv y'}}^t  [\vv x(s)]^{\vv y}  \, ds - \int_{-\tau_{\vv y \to \vv y'}}^0 [\vv \theta(s)]^{\vv y}   \, ds \right) \vv y  \in S.
    \end{align*}
%The setting for Theorem~\ref{prop:main} only involves the case when $S = \rr^n$ and the system admits no such conservation relation, because the Jacobian matrix $J$, whose range lies in $S$, is nonsingular by assumption.

While the ODE and delay models share the same set of positive equilibria, when $S \subsetneq \rr^n$, solving for an equilibrium with a given initial data can be difficult. In this paper, we only consider systems whose stoichiometric subspace $S$ is the whole $\rr^n$. 
\end{remark}

We represent a mass action system $\mathcal{N}_{\vv k}$ or a delay mass action system $\mathcal{N}_{\vv \tau, \vv k}$ by labeling the reactions with their rate constants and (if non-zero) delay parameters. A reaction $\vv y \to \vv y'$ with rate constant $k > 0$ that occurs without delay, i.e., $\tau_{\vv y\to \vv y'} = 0$, is shown as 
    \begin{align*}
        \vv y \xrightarrow{k} \vv y',
    \end{align*}
while a reaction with rate constant $k > 0$ and delay parameter $\tau_{\vv y \to \vv y'}  = \tau > 0$ will be shown as 
    \begin{align*}
        \vv y \xrightarrow[\tau]{k} \vv y'.
    \end{align*}
Sometimes we omit the labels altogether; in this case, it will be clear from the context whether we are referring to the reaction network $\mathcal N$, or the mass action system $\mathcal N_{\vv k}$ or the delay mass action system $\mathcal N_{\vv \tau, \vv k}$.

\begin{example}
\label{ex:runningEx}
Consider a reaction network $\mc N$ with three species $A_1$, $A_2$, $A_3$. The network consists of the following reactions:
    \begin{align*}
	\label{eq:runningEx}
		A_1 + A_2 &\xrightarrow{k_1}  A_3, \quad
		A_1 \xrightarrow{k_2}  A_2, \quad 
		0 \xrightarrow{k_3}  A_1, 
		\\
		A_1 &\xrightarrow{k_4} 0, \quad 
		A_2  \xrightarrow{k_5}   0 ,  \quad 
		A_3 \xrightarrow{k_6} 0.  \nonumber
	\end{align*} 
The system of ODEs for this mass action system
    \begin{align*}
        \dot{x}_1 &= -k_1 x_1 x_2 - k_2 x_1 + k_3 - k_4 x_1 \\
        \dot{x}_2 &= -k_1 x_1 x_2 + k_2 x_1 - k_5 x_2 \\
        \dot{x}_3 &= k_1 x_1 x_2 - k_6 x_3 
    \end{align*}
admits a unique positive equilibrium. 
% Unique positive steady state at 
%     \begin{align*}
%         x_B &= \frac{
%             (-k_2k_B -k_Ak_B -k_1k_3) \pm \sqrt{
%             (-k_2k_B -k_Ak_B -k_1k_3)^2
%              + 4 k_1k_2k_3k_B
%             }
%             }{2k_1k_B} \\
%         x_A &= \frac{k_3}{k_1x_B +k_2 + k_A} \\
%         x_C &= \frac{k_1}{k_C} x_A x_B
%     \end{align*}

Consider delay $\tau_{1} \geq 0$ for the reaction $A_1+A_2 \to A_3$ and  $\tau_{2} \geq 0$ for the reaction $A_1 \to A_2$. To simplify notation in the system of delay equations, when the concentration function $x_i$ is not shifted in time, i.e., $x_i = x_i(t)$, we suppress the explicit appearance of time. The system delay differential equations for $\mc N_{\vv\tau, \vv k}$ is
    \begin{align*}
        \dot{x}_1 &= -k_1 x_1 x_2 - k_2 x_1 + k_3 - k_4 x_1 \\
        \dot{x}_2 &= -k_1 x_1 x_2 + k_2 x_1(t-\tau_2) - k_5 x_2 \\
        \dot{x}_3 &= k_1 x_1(t-\tau_1) x_2(t-\tau_1) - k_6 x_3 .
    \end{align*}
\end{example}

%%%%%%%%%%%%
\section{Linear analysis of delay reaction systems}
\label{sec:linear}

In this section, we introduce the linearization of the delay mass action system (\ref{eq:gode-delay}) about an equilibrium $\vv x^*$. For a fixed $\vv x > \vv 0$, we define a special dot product:   
    \begin{align*}
        \vv y*\vv e_i = \frac{\vv y \cdot \vv e_i}{\vv x \cdot \vv e_i} = \frac{y_i}{x_i}, 
    \end{align*}
where $\{\vv e_i : 1 \leq i \leq n \}$ is the standard basis of $\rr^n$. The product depends on a choice of $\vv x > \vv 0$, which will be evident from context, for example, as in the partial derivative $\partial_i \vv x^{\vv y} = \vv x^{\vv y}(\vv y * \vv e_i)$~\cite{Craciun&Feinberg}.

The linearization of the delay system (\ref{eq:gode-delay}) is done the usual way, by adding a small quantity $\delta \vv x $ to an equilibrium $\vv x^*$. Linearizing the delay system (\ref{eq:gode-delay})  about $\vv x^*$, we  obtain the linear delay system
\begin{equation}\label{eq:del_lin_sys}
    \dot{\vv x} =  \sum_{\vv y \to \vv y' \in \mathcal{R}} J_{\tau_{\vv y \to \vv y'}} (\vv x^*,\vv k) \vv x (t - \tau_{\vv y \to \vv y'}) -\sum_{\vv y\to \vv  y'  \in \mathcal{R}} J_{\vv y\to \vv y'} (\vv x^*,\vv k)\vv x(t),
\end{equation}
where
\begin{equation*}\label{jac-tauy}
    J_{\tau_{\vv y \to \vv y'}} (\vv x,\vv k) = \left[ k_{\vv y \rightarrow \vv y'}\vv x^{\vv y} (\vv y*\vv e_1)\vv y',\, \ldots , \,k_{\vv y \rightarrow \vv y'}\vv x^{\vv y} (\vv y*\vv e_n)\vv y'   \right]  
\end{equation*} 
and 
\begin{equation*}\label{eq:jac0}
    J_{\vv y\to \vv y'}(\vv x,\vv k) = \left[ k_{\vv y \rightarrow \vv y'} \vv x^{\vv y} (\vv y*\vv e_1)\vv y,\, \ldots ,\, k_{\vv y \rightarrow \vv y'}\vv x^{\vv y} (\vv y*\vv e_n)\vv y   \right]
\end{equation*} 
are $n \times n$ matrices.

Next substitute $\vv x(t) - \vv x^* = \vv a e^{\lambda t}$ into the linearized system (\ref{eq:del_lin_sys}) to obtain its characteristic equation. The resulting linear system (in $\vv a$) is
\begin{equation*}\label{eq:ld1}
    \left(\sum_{\vv y \to \vv y' \in \mathcal{R} }
    \left(
     J_{\tau_{\vv y \to\vv  y'}} (\vv x^*,\vv k) e^{-\lambda \tau_{\vv y \to \vv y'}} - 
    J_{\vv y\to \vv y'} (\vv x^*,\vv k) 
    	\right)
    -\lambda I \right)\vv  a = \vv 0,
\end{equation*}
which has a non-zero solution if and only if 
\begin{equation}\label{eq:char_pol_del}
    \det ( J_{\lambda} (\vv x^*,\vv k, \vv \tau) - \lambda I ) = 0,
\end{equation}   
where 
\begin{align}\label{eq:jlam}
    &J_{\lambda} (\vv x,\vv k, \vv \tau) =
    \sum_{\vv y \to \vv y'  \in \mathcal{R}} J_{\tau_{\vv y \to\vv  y'}} (\vv x,\vv k) e^{-\lambda \tau_{\vv y \to \vv y'}} - \sum_{\vv y \to\vv  y'  \in \mathcal{R}}J_{\vv y\to\vv  y'} (\vv x,\vv k)    \nonumber   \\
    =&\left[ \sum_{\vv y \rightarrow \vv y' \in {\mathcal R}}{k_{\vv y \rightarrow \vv y'}\vv x^{\vv y} (\vv y*\vv e_1)(\vv y'e^{-\lambda \tau_{\vv y\to\vv  y'}} - \vv y)},\, \ldots , \sum_{\vv y \rightarrow \vv y' \in {\mathcal R}}{k_{\vv y \rightarrow \vv y'} \vv x^{\vv y} (\vv y*\vv e_n)(\vv y' e^{-\lambda \tau_{\vv y\to\vv  y'}} -\vv y)}   \right]  .
\end{align}
The transcendental equation (\ref{eq:char_pol_del}) is the \df{characteristic equation} of  (\ref{eq:del_lin_sys}).

If $\vv \tau = \vv 0$, we recover the Jacobian matrix of the corresponding ODE system (\ref{eq:gode-mak}), i.e., we have  $J_{\lambda}(\vv x^*, \vv k, \vv\tau= \vv 0) = J(\vv x^*, \vv k)$, where
\begin{equation}\label{eq:jac}
    J (\vv x,\vv k) = \left[ \sum_{\vv y \rightarrow \vv y' \in {\mathcal R}} k_{\vv y \rightarrow \vv y' } \vv x^{\vv y} (\vv y*\vv e_1) (\vv y'-\vv y) ,\,  \ldots , 
    \sum_{\vv y \rightarrow \vv y' \in {\mathcal R}} k_{\vv y \rightarrow \vv y'}  \vv x^{\vv y} (\vv y*\vv e_n)(\vv y' -\vv y)  \right] .
\end{equation}

An equilibrium  $\vv x^* \in \rrpp^{n}$ of a delay mass action system $\mathcal{N}_{\vv \tau, \vv k}$ is \emph{asymptotically  stable} if the characteristic equation (\ref{eq:char_pol_del}) has only roots  $\lambda$ with negative real parts. The equilibrium  $\vv x^*$ is \emph{unstable} if at least one root of (\ref{eq:char_pol_del}) has positive real part~\cite{Hale1993, Smith2011}.

Theorem~\ref{prop:main} gives a sufficient condition on the modified Jacobian matrix (defined below) for the asymptotic stability of an equilibrium of the delay system (\ref{eq:gode-delay}) and of the linearized system (\ref{eq:del_lin_sys}), \emph{independent of the choice of delays}. Asymptotic stability of an equilibrium {independent} of the delay parameters is known as \emph{absolute stability}, as opposed to \emph{conditional stability}, where asymptotic stability depends on the delay parameters~\cite{Bellman_Cooke,Brauer}. In light of this, we define the notion of absolute stability for a mass action system, and the stronger notion of delay stability for a reaction network (under mass action kinetics).

\begin{definition}
     \label{df:ASmas}
     A delay mass action system $\mathcal{N}_{\vv \tau, \vv k}$ is said to be \df{absolutely stable} if for any positive equilibrium, every root $\lambda$ of the characteristic equation (\ref{eq:char_pol_del}) has negative real part, for any choice of delay parameters $\vv \tau \geq \vv 0$.
\end{definition}

\begin{definition}
\label{df:DScrn}
    A reaction network $\mathcal{N}$ is \df{delay stable} if the delay mass action system $\mathcal{N}_{\vv\tau , \vv k}$ is absolutely stable for any choices of $\vv k > \vv 0$ and $\vv \tau \geq \vv 0$.
\end{definition}

%%%%%%%%%%%%%%%%%%%%%%%%%%%%%%%%%
\section{Main Result}
\label{sec:mainresult} 

In our main result, the following matrix $\tilde{J}(\vv x,\vv k)$ plays an important role: 
\begin{equation}\label{jactilde}
    \left[ \sum_{\vv y \rightarrow \vv y' \in {\mathcal R}}{k_{\vv y \rightarrow \vv y'}\vv x^{\vv y} (\vv y*\vv e_1)(\vv y'+\tilde{\vv y}^{(1)})}, \ldots , \sum_{\vv y \rightarrow \vv y' \in {\mathcal R}}{k_{\vv y \rightarrow \vv y'}{\vv x}^{\vv y} (\vv y*\vv e_n)(\vv y'+\tilde{\vv y}^{(n)})}   \right]  
\end{equation}
where  $\tilde{\vv y}^{(i)}=(y_1, y_2,\ldots, -y_i, \ldots ,y_n)^\top.$ The matrix (\ref{jactilde}) looks similar to the Jacobian matrix $J(\vv x, \vv k)$, but in the $i$th column, instead of the reaction vectors $\vv y' - \vv y$, we have $\vv y' + \vv y^{(i)}$. A change of sign occurs in every off-diagonal component of the reactant complex. We call the matrix $\tilde{J}(\vv x,\vv k)$ the \df{modified Jacobian matrix} of the network.  

\medskip

The matrices $J_{\lambda}(\vv x^*,\vv k, \vv \tau)$, $J(\vv x^*,\vv k)$,  $\tilde{J}(\vv x^*,\vv k)$ and the characteristic equation (\ref{eq:char_pol_del}) give information about stability of any equilibrium $\vv x^*$ of a delay mass action system $\mathcal N_{\vv \tau, \vv k}$. However, the matrices and the characteristic equation are well-defined at any positive state $\vv x$ and any positive rate constants $\vv k$.

\begin{example}
\label{ex:runningEx2}
Consider the delay mass action system from Example~\ref{ex:runningEx}. Its Jacobian matrix is
    \begin{align*}
        J(\vv x, \vv k) = \left[ 
        \begin{matrix}
          -(k_1 x_2+k_2 + k_4) & -k_1 x_1 & 0 \\
          -k_1x_2 + k_2 & -(k_1 x_1+k_5) & 0 \\
          k_1x_2 & k_1 x_1 & - k_6
        \end{matrix}
        \right].
    \end{align*}
For the delay system, we have
    \begin{align*}
        J_\lambda(\vv x, \vv k, \vv \tau) = \left[ 
        \begin{matrix}
            -(k_1 x_2+k_2 + k_4) & -k_1 x_1 & 0 \\
          -k_1x_2 + k_2e^{-\lambda \tau_2} & -(k_1 x_1+k_5) & 0 \\
          k_1x_2 e^{-\lambda \tau_1} & k_1 x_1 e^{-\lambda \tau_1} & - k_6
        \end{matrix}
        \right],
    \end{align*}
and the characteristic equation of the linearized system is 
\begin{align*}
    0 &= \det(J_\lambda - \lambda I) 
    = \lambda^3 + \lambda^2 \left( k_1(x_1+x_2) + k_2 + k_4 + k_5 - k_6 \right) 
    \\& \quad + \lambda \left( k_1k_2x_1 + k_1k_4x_1 
    -k_1k_6x_1 + (k_2-k_6)(k_1x_2+k_2+k_4) - k_5k_6\right)
    \\& \quad + k_1^2k_6 x_1x_2
    + e^{-\lambda \tau_2} \left( \lambda k_1 x_2 - k_1k_6x_1\right). 
\end{align*}  
Finally, the modified Jacobian matrix is 
    \begin{align*}
        \tilde{J}(\vv x, \vv k) = \left[ 
        \begin{matrix}
          -(k_1 x_2+k_2 + k_4) & k_1 x_1 & 0 \\
          k_1x_2 + k_2 & -(k_1 x_1+k_5) & 0 \\
          k_1x_2 & k_1 x_1 & - k_6
        \end{matrix}
        \right].
    \end{align*}
\end{example}

A matrix $M$ is a \emph{$P_0$-matrix} if it has only non-negative principal minors~\cite{fiedlerptak1962,johnson1974}. A matrix $M$ is \emph{reducible} if it can be placed into block upper triangular form by simultaneous row and column permutations; otherwise $M$ is \emph{irreducible}~\cite{Lancaster1985}. The following theorem is inspired by~\cite[Lemma~1]{HofbauerSo2000}. 
Note that one of the hypotheses in this theorem is the absence of autocatalytic reactions; e.g. reactions of the form $A \to 2A$ are forbidden. In a non-autocatalytic network, we have $y'_i \leq y_i$ for any reaction $\vv y \to \vv y'$ and any $i \in \mathrm{supp}(\vv y) \cap \mathrm{supp}(\vv y')$. Another restriction comes from $\det(J) \neq 0$. In particular, the stoichiometric subspace is $S = \rr^n$, so the system admits no conservation relation.

%%%%%%%%%%%%%%%%%%%%%%%%%%%%%%%%%%%%%%%%%%%%%%%%%%%%%%
\begin{theorem}
\label{prop:main}
Let $\mathcal N$ be a non-autocatalytic network, and $\mathcal N_{\vv \tau, \vv k}$ be the delay mass action system for some $\vv k > \vv 0$ and $\vv \tau \geq \vv 0$. Let $\vv x^* > \vv 0$ be  an equilibrium of $\mathcal N_{\vv \tau, \vv k}$. Let $J_\lambda$, $J$ and $\tilde J$ be defined as in (\ref{eq:jlam}), (\ref{eq:jac}) and (\ref{jactilde}) at $\vv x^*$, $\vv k$ and $\vv\tau$. Suppose $\det J \neq 0$, $\tilde{J}_{ii}<0$ for all $i$, and  $-\tilde{J}$  is a $P_0$-matrix. Then all the roots of the characteristic equation $\det (J_{\lambda} - \lambda I) =0$ have negative real parts. 
\end{theorem}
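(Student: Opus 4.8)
The plan is to prove the equivalent statement that $\det(J_\lambda-\lambda I)\neq 0$ for \emph{every} $\lambda\in\mathbb C$ with $\operatorname{Re}\lambda\ge 0$; this is exactly the assertion that all roots lie in the open left half-plane, and proving it directly avoids any continuation/argument-principle argument in $\vv\tau$ (and makes clear that the argument works for every $\vv\tau\ge\vv 0$, giving absolute stability). The engine is an entrywise comparison between the complex matrix $J_\lambda-\lambda I$ and the real modified Jacobian $\tilde J$. Reading off \eqref{eq:jlam} and \eqref{jactilde} at $\vv x^*$, one checks: for $i\neq j$, $(\tilde J)_{ij}\ge 0$ and, using $|e^{-\lambda\tau}|\le 1$ together with $y_i'+y_i\ge|y_i'-y_i|$, $|(J_\lambda)_{ij}|\le(\tilde J)_{ij}$; on the diagonal $(\tilde J)_{ii}=J_{ii}$ and, using $\operatorname{Re}(e^{-\lambda\tau})\le 1$ and $y_i'\ge 0$, $\operatorname{Re}\big((J_\lambda)_{ii}\big)\le(\tilde J)_{ii}<0$ (this is where the non-autocatalytic hypothesis enters, controlling the diagonal: $y_i'\le y_i$ on common support makes each delayed contribution to $(J_\lambda)_{ii}$ dominated by an instantaneous one). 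Since $-\tilde J$ has nonpositive off-diagonal entries, it is a $Z$-matrix, and being a $P_0$-matrix it is therefore an $M_0$-matrix (a possibly singular $M$-matrix).

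Next I would record the standard comparison lemma: if $C$ is a real nonsingular $M$-matrix and $B\in\mathbb C^{n\times n}$ satisfies $-\operatorname{Re}(B_{ii})\ge C_{ii}$ and $|B_{ij}|\le -C_{ij}$ for $i\neq j$, then $B$ is nonsingular. Indeed, pick $\vv d>\vv 0$ with $C\vv d>\vv 0$; then $|B_{ii}|d_i\ge -\operatorname{Re}(B_{ii})d_i\ge C_{ii}d_i>\sum_{j\neq i}|C_{ij}|d_j\ge\sum_{j\neq i}|B_{ij}|d_j$, so $B\,\mathrm{diag}(\vv d)$ is strictly row diagonally dominant, hence invertible. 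Two of the three cases are then immediate. If $\lambda=0$, then $J_\lambda-\lambda I=J$ and $\det J\neq 0$ by hypothesis. If $\operatorname{Re}\lambda=\mu>0$, then $C:=-\tilde J+\mu I$ is a nonsingular $M$-matrix (adding $\mu I$ to an $M_0$-matrix), and the estimates above give $-\operatorname{Re}\big((J_\lambda-\lambda I)_{ii}\big)=-\operatorname{Re}\big((J_\lambda)_{ii}\big)+\mu\ge -\tilde J_{ii}+\mu=C_{ii}$ and $|(J_\lambda-\lambda I)_{ij}|=|(J_\lambda)_{ij}|\le(\tilde J)_{ij}=-C_{ij}$, so the lemma yields $\det(J_\lambda-\lambda I)\neq 0$.

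The remaining case, $\lambda=i\omega$ with $\omega\neq 0$, is the crux, since now $C=-\tilde J$ may be singular. Argue by contradiction: suppose $\det(J_{i\omega}-i\omega I)=0$, take $\vv a\neq\vv 0$ in the kernel, set $B=J_{i\omega}-i\omega I$. By simultaneous row/column permutations bring $-\tilde J$ to block upper-triangular form with irreducible diagonal blocks, and restrict to the last block on which $\vv a$ is nonzero; the block structure forces $B_{ij}=0$ for $i$ in that block and $j$ in an earlier block, so $B$ restricted to the block annihilates the corresponding part of $\vv a$, and the hypotheses ($Z$-matrix, $P_0$, negative diagonal, the entrywise bounds) descend to this irreducible block. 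If that block of $-\tilde J$ is a nonsingular $M$-matrix, the comparison lemma applies verbatim and contradicts the existence of the kernel vector. If it is a singular irreducible $M$-matrix, then writing $B\vv a=\vv 0$ row by row gives $|B_{ii}|\,|a_i|\le\sum_{j\neq i}|B_{ij}|\,|a_j|\le\sum_{j\neq i}(\tilde J)_{ij}|a_j|$; since $|B_{ii}|\ge-\operatorname{Re}(B_{ii})\ge -\tilde J_{ii}$ this reads $(-\tilde J)\,|\vv a|\le\vv 0$ on the block, and the strictly positive left null vector of a singular irreducible $M$-matrix forces $(-\tilde J)\,|\vv a|=\vv 0$, after which the one-dimensionality of its (positive) kernel forces $|\vv a|>\vv 0$ on the block. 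Feeding $|a_i|>0$ back into the chain turns every inequality into equality: $|B_{ii}|=-\operatorname{Re}(B_{ii})=-\tilde J_{ii}$, so $\operatorname{Im}(B_{ii})=0$ and $\operatorname{Re}\big((J_{i\omega})_{ii}\big)=(\tilde J)_{ii}$ for each $i$ in the block. But $(\tilde J)_{ii}-\operatorname{Re}\big((J_{i\omega})_{ii}\big)$ is a sum over reactions $\vv y\to\vv y'$ of nonnegative multiples of $\bigl(1-\cos(\omega\tau_{\vv y\to\vv y'})\bigr)$, the $\vv y\to\vv y'$ coefficient being positive exactly when $y_i>0$ and $y_i'>0$; so $\cos(\omega\tau_{\vv y\to\vv y'})=1$, hence $\sin(\omega\tau_{\vv y\to\vv y'})=0$, for all such reactions. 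Since $\operatorname{Im}\big((J_{i\omega})_{ii}\big)$ is a sum of multiples of those same $\sin(\omega\tau_{\vv y\to\vv y'})$, it vanishes, so $0=\operatorname{Im}(B_{ii})=\operatorname{Im}\big((J_{i\omega})_{ii}\big)-\omega=-\omega$, contradicting $\omega\neq 0$. This closes all cases.

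The main obstacle is precisely this last case: excluding nonzero purely imaginary roots when $\tilde J$ (equivalently, some irreducible diagonal block of it) is singular. The soft comparison only produces $(-\tilde J)\,|\vv a|\le\vv 0$, which a singular $M$-matrix tolerates, so one must use the Perron structure of irreducible singular $M$-matrices (positivity of both one-dimensional null spaces) to upgrade $\le$ to $=$ and to force $|\vv a|>\vv 0$, and then the rigidity of the resulting equality case — every $\cos(\omega\tau_{\vv y\to\vv y'})$ equal to $1$, hence every $\sin(\omega\tau_{\vv y\to\vv y'})$ equal to $0$ — to collide with the $-i\omega$ on the diagonal of $J_{i\omega}-i\omega I$. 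Care is also needed in the block-triangular reduction, to be sure the $P_0$/$M_0$ property, the sign of the diagonal, and the entrywise bounds all pass to each diagonal block.
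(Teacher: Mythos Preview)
Your argument is correct and rests on the same core ingredients as the paper's proof: the entrywise bounds $|(J_\lambda)_{ij}|\le\tilde J_{ij}$ for $i\neq j$ and $\operatorname{Re}(J_\lambda)_{ii}\le\tilde J_{ii}$, the $Z$/$P_0$ (hence $M_0$) structure of $-\tilde J$, and the block upper-triangular reduction in the reducible case. The organization, however, differs. The paper handles every $\lambda\neq 0$ with $\operatorname{Re}\lambda\ge 0$ in one pass: from Fiedler--Pt\'ak it extracts $\vv v>\vv 0$ with $\tilde J\vv v\le\vv 0$ and applies a weighted Gershgorin argument to $D^{-1}J_\lambda D$, observing (coordinate by coordinate, splitting on whether the contributing $e^{-\lambda\tau}$ are real) that every nonzero point of each Gershgorin disk has negative real part. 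You instead split on $\operatorname{Re}\lambda>0$ versus $\lambda=i\omega$; the first case falls to the nonsingular $M$-matrix $-\tilde J+\mu I$, while for the second you invoke the Perron structure of a singular irreducible $M$-matrix to upgrade $(-\tilde J)\,|\vv a|\le\vv 0$ to equality, force $|\vv a|>\vv 0$, and then read off $\cos(\omega\tau)=1$, $\sin(\omega\tau)=0$ from the resulting rigidity, contradicting $\omega\neq 0$. Your imaginary-axis analysis is longer but makes the equality case fully explicit; the paper's Gershgorin route is more economical and sidesteps the singular/nonsingular dichotomy entirely. One small aside: the bound $\operatorname{Re}(J_\lambda)_{ii}\le\tilde J_{ii}$ follows already from $y_i'\ge 0$ and $\operatorname{Re}(e^{-\lambda\tau})\le 1$, so your parenthetical attributing it to the non-autocatalytic hypothesis is not quite where that assumption is doing work (in the paper it is packaged with the standing hypothesis $\tilde J_{ii}<0$).
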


\begin{proof}
Any root $\lambda$ of the characteristic equation $\det(J_\lambda - \lambda I) = 0$ is non-zero, because $J_{\lambda} = J$ when $\lambda = 0$. Suppose for a contradiction, that for some values of delay, the characteristic equation (\ref{eq:char_pol_del}) has a root $\lambda \neq 0$ with real part $\mathrm{Re}(\lambda) \geq 0$. Such a root $\lambda$ is an eigenvalue of $J_\lambda$. 

For $i\neq k$, we have $|(J_\lambda)_{ik}| \leq \tilde J_{ik}$, and $\tilde{J}_{ik} \neq 0$ if $(J_\lambda)_{ik} \neq 0$. Suppose for now that $\tilde J$ is irreducible. Because $-\tilde J$ is an irreducible $P_0$-matrix, there exists a vector $\vv v>\vv 0$ such that $\tilde{J}  \vv v \leq \vv 0 $~\cite[Theorem 5.8]{fiedlerptak1962}. For $i = 1,2,\ldots, n$, we have
\begin{equation}\label{eq:wdd_c}
    \tilde{J}_{{ii}} v_i +\sum_{k \ne i}  \tilde{J}_{ik}   v_k  \leq 0.
\end{equation}

To apply Gershgorin circle theorem, we are interested in the disks $B_i$ with center $(J_\lambda)_{ii}$ and radius $ v_i^{-1} \sum_{k \neq i} |J_\lambda|_{ik} v_k$. For each $i = 1,2,\ldots, n$, there are two cases to consider. In the first case, suppose $ e^{-\lambda \tau_{\vv y\to \vv y'}} \in \rr$ for every $\vv y \to \vv y'$ with $y_i$, $y_i' \neq 0$. Then $(J_\lambda)_{ii} \leq \tilde{J}_{ii} \leq 0$. From (\ref{eq:wdd_c}), we have 
    \begin{equation*}\label{eq:rad2}
        v_{i}^{-1} \sum_{k \ne i} |(J_{\lambda})_{ik}|  v_k  \leq 
        - \tilde{J}_{ii} \leq -\mathrm{Re}(J_\lambda)_{ii}.
    \end{equation*}

In the second case, suppose there is at least one reaction $\vv y \to \vv y'$ for which $y_i$, $y_i' \neq 0$ and  $e^{-\lambda \tau_{\vv y\to \vv y'}}  \not\in \mathbb{R}$. For every such reaction, $\sin(\mathrm{Im}(\lambda) \tau_{\vv y\to \vv y'}) \neq 0$, or $\cos(\mathrm{Im}(\lambda) \tau_{\vv y\to \vv y'}) < 1$, and thus 
    \begin{align*}
        y_i'  \mathrm{Re}(e^{-\lambda \tau_{\vv y\to \vv y'}}) 
        = y_i'  e^{-\mathrm{Re}(\lambda) \tau_{\vv y \to \vv y'}} \cos(\mathrm{Im}(\lambda)\tau_{\vv y\to\vv y'}) < y_i'. 
    \end{align*} 
Therefore, $\mathrm{Re}(J_\lambda)_{ii} < \tilde J_{ii}$. It follows from (\ref{eq:wdd_c}) that 
\begin{align*}
    \mathrm{Re}(J_\lambda)_{ii} v_i + \sum_{k\neq i} | (J_{\lambda})_{ik}| v_k
    &< \tilde J_{ii} v_i + \sum_{k\neq i} \tilde J_{ik} v_k \leq 0.
\end{align*}
In other words, 
    \begin{equation*}\label{eq:rad}
        v_{i}^{-1}\sum_{k \ne i} |(J_{\lambda})_{ik}|  v_k  < -\mathrm{Re}(J_\lambda)_{ii}.
    \end{equation*}
Therefore, for $i=1,2,\ldots, n$, any non-zero element in the disk $B_i$, with center $(J_\lambda)_{ii}$ and radius $\sum_{k \ne i}v_{i}^{-1} |(J_{\lambda})_{ik}|  v_k$, has negative real part.

Let $D = \mathrm{diag}(v_1,v_2,\ldots, v_n)$, and consider the matrix $D^{-1} J_{\lambda} D$, which is similar to $J_\lambda$ and shares the same eigenvalues. By Gershgorin's theorem, the eigenvalues of $D^{-1} J_{\lambda} D$ are contained in the union of the disks $B_i$. Hence, any non-zero eigenvalue of $J_{\lambda}$ has negative real part. This contradicts our assumption that $\lambda$ has non-negative real part.

Now consider the case when $\tilde{J}$ is reducible. By relabeling the species, we may assume that $\tilde{J}$ is an upper block triangular matrix with irreducible blocks along the diagonal~\cite{BermanPlemmons1994,HofbauerSo2000}.  Of course, the principal minors of $\tilde{J}$ are unchanged. Whenever $i\neq k$, note that $\tilde{J}_{ik} = 0$ implies that $(J_\lambda)_{ik} = 0$, so each irreducible diagonal block of $\tilde J$ corresponds to a (possibly reducible) diagonal  block of $J_\lambda$. In particular, $\det(J_\lambda - \lambda I)$ is the product of $\det(M_j - \lambda I)$, where each $M_j$ is a diagonal block of $J_\lambda$ corresponding to an irreducible diagonal block of $\tilde{J}$. Since $\lambda$ is an eigenvalue of $J_\lambda$, it is an eigenvalue of some $M_j$. Now the result above can be applied to the corresponding irreducible $P_0$-block of $\tilde{J}$.
\end{proof}

\begin{remark}
Although Theorem~\ref{prop:main} is stated for mass action systems, the result holds for more general kinetics under some mild conditions. More precisely, the result above holds for kinetics $\mathscr{K}$ defined on $\rrpp^n$, where $\frac{\partial \mathscr{K}_{\vv y\to \vv y'}}{\partial x_j} \geq 0$ for all indices $j$, and for any $i \in \mathrm{supp}(\vv y)$ we also require that $\frac{\partial \mathscr{K}_{\vv y\to \vv y'}}{\partial x_i} > 0$.
\end{remark}  

\begin{remark}
\label{rmk:difftau}
In the proof above, we did not make use of the fact that the same delay parameter $\tau_{\vv y\to \vv y'}$ could appear more than once in $J_{\lambda}$. The above result holds even when different species are produced by the same reaction with different delay parameters, as in Example~\ref{ex:notdelaystable}. 

For example, we may have a reaction $A_1 \to 2A_2 + A_3$, where $A_2$ is produced after a delay time $\tau_1$ and $A_3$ is produced after a delay time $\tau_2$. The matrix $J_\lambda$ would include in its first column the term 
    \begin{align*}
        k x_1 \begin{bmatrix}
            -1  \\
           2  e^{-\tau_1} \\ 
           e^{-\tau_2}
        \end{bmatrix}.
    \end{align*}
\end{remark} 
\medskip 

When the assumptions in  Theorem~\ref{prop:main} hold independent of the rate constants $\vv k$ and equilibrium $\vv x^*$, we conclude delay stability.

\begin{corollary}
\label{cor:main}
Let $\mathcal N$ be a non-autocatalytic network. Let $J$ and $\tilde J$ be defined as in (\ref{eq:jac}) and (\ref{jactilde}) as functions of $\vv x > \vv 0$ and $\vv k > \vv 0$. Suppose $\det J \neq 0$, $\tilde{J}_{ii}<0$ for all $i$, and  $-\tilde{J}$  is a $P_0$-matrix for all choices of  $\vv k > \vv 0$  and all equilibrium points $\vv x > \vv 0$  of $\mathcal{N}_{\vv k}$. Then $\mathcal{N}$ is delay stable. 
\end{corollary}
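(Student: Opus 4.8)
The plan is to reduce the corollary directly to Theorem~\ref{prop:main} by quantifying over the parameters. First I would fix arbitrary rate constants $\vv k > \vv 0$ and an arbitrary delay vector $\vv \tau \geq \vv 0$, and let $\vv x^* > \vv 0$ be any positive equilibrium of the delay mass action system $\mathcal{N}_{\vv \tau, \vv k}$. The key observation, already recorded in the excerpt, is that the delay system \eqref{eq:gode-delay} and the ODE system \eqref{eq:gode-mak} share exactly the same set of positive equilibria; hence $\vv x^*$ is also an equilibrium of $\mathcal{N}_{\vv k}$, so it is one of the points at which the corollary's hypotheses are assumed to hold.

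Next I would invoke those hypotheses at $\vv x^*$: by assumption $\det J(\vv x^*, \vv k) \neq 0$, each diagonal entry satisfies $\tilde{J}_{ii}(\vv x^*, \vv k) < 0$, and $-\tilde{J}(\vv x^*, \vv k)$ is a $P_0$-matrix. Since $\mathcal{N}$ is non-autocatalytic by hypothesis, all the conditions of Theorem~\ref{prop:main} are met for the data $(\vv x^*, \vv k, \vv \tau)$, and therefore every root $\lambda$ of the characteristic equation $\det(J_\lambda(\vv x^*, \vv k, \vv \tau) - \lambda I) = 0$ has negative real part. Because $\vv x^*$ was an arbitrary positive equilibrium, $\mathcal{N}_{\vv \tau, \vv k}$ is absolutely stable in the sense of Definition~\ref{df:ASmas}.

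Finally, since $\vv k > \vv 0$ and $\vv \tau \geq \vv 0$ were arbitrary, the system $\mathcal{N}_{\vv \tau, \vv k}$ is absolutely stable for every admissible choice of parameters, which is precisely the definition (Definition~\ref{df:DScrn}) of $\mathcal{N}$ being delay stable. There is essentially no obstacle here: the mathematical content resides entirely in Theorem~\ref{prop:main}, and the corollary is just the observation that when the algebraic conditions on $J$ and $\tilde{J}$ hold \emph{uniformly} in $\vv k$ and in the equilibrium point, the conclusion of the theorem transfers to every delay mass action system built on $\mathcal{N}$. The only point requiring a moment's care is matching the quantifiers --- ensuring that the phrase ``for all equilibrium points $\vv x > \vv 0$ of $\mathcal{N}_{\vv k}$'' in the hypothesis genuinely covers ``the equilibrium $\vv x^*$ of $\mathcal{N}_{\vv \tau, \vv k}$'' appearing in the theorem --- and this is exactly what the shared-equilibria fact supplies.
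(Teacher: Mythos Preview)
Your proposal is correct and matches the paper's approach: the paper treats Corollary~\ref{cor:main} as an immediate consequence of Theorem~\ref{prop:main}, offering only the one-line remark that under its hypotheses every positive equilibrium of $\mathcal{N}_{\vv\tau,\vv k}$ is asymptotically stable for all $\vv k>\vv 0$ and $\vv\tau\geq\vv 0$. Your write-up is in fact slightly more explicit than the paper's, since you spell out the shared-equilibria fact that justifies applying the corollary's hypotheses to an equilibrium of the delay system.
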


Under the hypotheses of  Corollary~\ref{cor:main},
any positive equilibrium $\vv x^*$ of the delay mass action system $\mathcal{N}_{\vv \tau, \vv k}$ is asymptotically stable for any choice of $\vv k > \vv 0$, $\vv \tau \geq \vv 0$.

%%%%%%%%%%%%
\section{Examples}
\label{sec:examples}

\begin{example}
\label{ex:ocycleok}
Again, we return to the delay mass action system of Examples~\ref{ex:runningEx} and \ref{ex:runningEx2}, which we claim is delay stable. In other words, for any choice of rate constants $\vv k \in \rrpp^{\mc R}$ and any choice of delay parameters $\vv \tau \in \rrp^{\mc R}$, the delay mass action system $\mc N_{\vv \tau, \vv k}$ is absolutely stable --- any positive equilibrium is linearly stable for the system of delay differential equations. 

Recall that the modified Jacobian matrix is 
\begin{align*}
	\tilde{J}(\vv x, \vv k) = \begin{bmatrix}
		-k_1 x_2 -k_2  - k_4 & k_1 x_1 & 0 \\
		k_1 x_2 + k_2 &  - k_1 x_1 -k_5 & 0 \\
		k_1 x_2 & k_1 x_1 & -k_6
	\end{bmatrix}.
\end{align*}
It is not difficult to check that the principal minors of $-\tilde J(\vv x, \vv k)$ are all positive whenever $\vv x \in \rrpp^n$ and $\vv k \in \rrpp^{\mc R}$. Moreover, all the assumptions  in Corollary~\ref{cor:main} are satisfied. Therefore, the reaction network presented in Example~\ref{ex:runningEx} is delay stable, i.e., for all choices of rate constants $\vv k$ and delay parameters $\vv \tau$, any positive equilibrium is linearly stable for the delay mass action system $\mc N_{\vv \tau, \vv k}$. 
\end{example}

The next two examples contain {\em sequestration reactions}. These are reactions of the type $A_1+A_2 \to P$, where $P$ does not participate in other reactions. One example of sequestration reactions is when $A_1$ is a substrate that ``sequesters" an enzyme $A_2$ by binding it and making it inactive. Inactivation mechanisms containing reactions of this type (with $A_1$ sometimes called a {\em suicide substrate}) have been studied both analytically and experimentally \cite{maini1991}. Moreover, versions of sequestration networks appear as substructures of relevant enzymatic systems \cite{sequestration} and feature remarkable connections between network structure and dynamical behavior \cite{Craciun&Feinberg, sequestration, felix2016}.

\begin{example}
\label{ex:sequest1}
Consider the following sequestration network:
\begin{align}\label{eq:seq6}
    A_1 + A_2 &\to P  \nonumber \\ 
    A_2 +A_3 &\to Q \nonumber \\
    A_3 +A_1 &\to R \nonumber \\
    A_1\rightleftharpoons 0, \quad A_2 &\rightleftharpoons 0 \quad 
    A_3\rightleftharpoons 0 \nonumber\\
    P\rightleftharpoons 0, \quad Q &\rightleftharpoons 0 \quad 
    R\rightleftharpoons 0 \nonumber
\end{align}
With $x_1,x_2, x_3, x_P, x_Q, x_R$ denoting the concentrations of $A_1$, $A_2$, $A_3$, $P$, $Q$, and $R$ respectively, the corresponding delay system is 
\begin{align*}
    \dot{x}_1 &= -k_1 x_1x_2 -k_3 x_3x_1-k_4 x_1+k_5 \\[2pt]
    \dot{x}_2 &= -k_1 x_1x_2 -k_2 x_2x_3-k_6 x_2+k_7 \\[2pt]
    \dot{x}_3 &= -k_2 x_2x_3 -k_3 x_3x_1-k_8 x_3+k_9 \\
    \dot x_P  &=k_1x_1(t-\tau_1)x_2(t-\tau_1)-k_{10}x_P+k_{11} \nonumber\\
    \dot x_Q  &=k_2x_2(t-\tau_2)x_3(t-\tau_2)-k_{12}x_Q+k_{13} \nonumber\\
    \dot x_R  &=k_1x_3(t-\tau_3)x_1(t-\tau_3)-k_{14}x_R+k_{15}. \nonumber
\end{align*}

Note that the modified Jacobian matrix for the augmented system has the block form
\begin{equation*}
    \tilde{J}=
    \begin{bmatrix}
    \tilde {J}_1 &0\\
    A &-D
    \end{bmatrix}
\end{equation*}
where $D=\textrm{diag}(k_{10},k_{11},k_{12})$ is a positive diagonal matrix and 
\begin{equation*}\label{eq:tildeJseq3}
\tilde{J_1}(\vv x, \vv k)=
\begin{bmatrix}
-k_1 x_2 -k_3x_3-k_4 & k_1 x_1 & k_3x_1 \\
k_1 x_2 & -k_1x_1 -k_2x_3 -k_6 & k_2x_2\\
k_3x_3 &k_2 x_3  & -k_2 x_2 -k_3 x_1 -k_8\\
\end{bmatrix}.
\end{equation*}

Let $J_1(\vv x, \vv k)$ be the top $3\times 3$ corner of the Jacobian matrix $J(\vv x, \vv k)$ --- the same corner occupied by $\tilde{J}_1$ in the modified Jacobian matrix $\tilde{J}$. It is easily checked that the principal minor of Jacobian matrix $J_1(\vv x, \vv k)$ (and therefore the Jacobian matrix $J(\vv x, \vv k)$ itself) is non-singular. Moreover, $-\tilde {J}_1$ is a $P_0$-matrix if and only if $-\tilde {J}$ is a $P_0$-matrix. A simple symbolic  calculation shows that indeed, $-\tilde J_{1}(\vv x, \vv k)$ is a $P_0$-matrix for any positive $\vv x$ and $\vv k$ (in fact, each principal minor of $-\tilde J_{1}$ is a polynomial in $\vv x$ and $\vv k$ with monomials of the same sign). Therefore $\det J(\vv x^*, \vv k)\neq 0$ and $-\tilde J(\vv x^*, \vv k)$ is a $P_0$-matrix for any positive steady state $\vv x^*$ and any positive ${\vv k}$. It follows by Corollary~\ref{cor:main} that the network is delay stable. 
\end{example}

\begin{remark}
Our example is a particular case of the fully open sequestration network  
\begin{equation*}\label{eq:genSeq}
A_i+A_{i+1}\to 0,\quad A_i\rightleftharpoons 0,\quad i\in\{1,\ldots, n\}, \quad A_{n+1}=A_1.
\end{equation*}
One can show that the general sequestration network is delay stable. This  can be proved along the same lines as above (but with substantial extra effort), but also as an immediate application of a theorem in our upcoming paper \cite{our_follow_up_paper_SR_graph}, where we prove that delay stability can be inferred from a certain digraph derived from the network, called the {\em directed species-reaction graph}.
\end{remark} 

\begin{remark}
While a network without positive steady states is trivially  delay stable, it turns out that the general sequestration network does admit positive steady states for any choice of rate constants. Indeed, it is not hard to show that the network is {\em dynamically equivalent}~\cite{craciun2008identifiability} to a reversible network, which is known to have positive steady states for all values of rate constants~\cite{boros}.
\end{remark}

\begin{example}\label{eq:seqKmn}
Next we define a fully open network which includes sequestration reactions for each species, and also includes a transmutation (or synthesis) reaction $A_1 \to  m A_n$ where $m \in \mathbb{N}$. Following the notation in \cite{js15}, we let $K_{m,n} (\tau)$ denote the fully open network 
\begin{align*}
A_1 + A_2 &\to 0   \\
A_2 +A_3 &\to 0  \\ 
&\,\,\,\,\vdots   \\
A_{n-1}+A_n &\to 0  \\
A_1 &\xrightarrow[\tau]{} m A_n   \\[5pt]
A_i &\rightleftharpoons 0, \quad i\in\{1,\ldots, n\},
\end{align*} 
where $\tau\ge 0$ is the delay associated to reaction $A_1 \to  m X_n$. Delay-free networks $K_{m,n} (0)$ and more general sequestration-transmutation networks have been analyzed in \cite{Craciun&Feinberg, cf06,sequestration, js15, tw19} for multistationarity and bistability. 

For simplicity, we let  $n=4$ for our calculations below. Let $x_i$ denote the species concentration of $A_i$ for $i=1,2,3,4$. The delay mass action system for the sequestration network  $K_{m,4} (\tau )$ is 
\begin{align*}\label{eq-sn-dde}
    \begin{array}{l}
    \dot{x}_1 = -k_1 x_1x_2 -k_4 x_1 -k_{5} x_1 +k_{9} \\[2pt]
    \dot{x}_2 = -k_{1} x_{1} x_2 -k_2 x_2 x_{3} -k_{6} x_2 +k_{10}   \\[2pt]
    \dot{x}_3 = -k_{2} x_{2} x_3 -k_3 x_3 x_{4} -k_{7} x_3 +k_{11}  \\[2pt]
    \dot{x}_4 = -k_3 x_3 x_4 + m k_4 x_1(t - \tau) -k_{8 } x_4 +k_{12} 
    \end{array}
\end{align*}
and its Jacobian matrix $J(\vv k,\vv x)$ with $\tau=0$ is 
\begin{equation*}\label{eq-seq-jac}
    \begin{bmatrix}
    -k_1 x_2 -k_4-k_{5} & -k_1 x_1 & 0 &0  \\
    -k_1 x_2 & -k_1 x_1 -k_2 x_3 -k_{6} & -k_2x_2 & 0\\
    0 & -k_2 x_3  & -k_2 x_2 -k_3 x_4 -k_{7}  &-k_3 x_3  \\
    m k_4  & 0&  -k_{3}x_4 &  -k_{3} x_{3} -k_{8}
    \end{bmatrix}.
\end{equation*}
One can verify (for example, using {\it  Maple}) that $\det(J({\vv k},{\vv x}))>0$ for all values of the rate constants $\vv k$ and all positive  ${\vv x}$ (in particular for all steady states $\vv x^*$).

The modified Jacobian matrix $\tilde{J} (\vv k,\vv x)$  defined in (\ref{jactilde}) is 
\begin{equation*}\label{eq-seq-mod-jac}
    \begin{bmatrix}
    -k_1 x_2 -k_4-k_{5} & k_1 x_1 & 0 &0  \\
    k_1 x_2 & -k_1 x_1 -k_2 x_3 -k_{6} & k_2x_2 & 0\\
    0 & k_2 x_3  & -k_2 x_2 -k_3 x_4 -k_{7}  &k_3x_3 \\
    m k_4  & 0&  -k_{3}x_4 &  -k_{3} x_{3} -k_{8}
    \end{bmatrix}.
\end{equation*}
Clearly, the  diagonal entries of $\tilde J$ are all negative. 
One can verify using {\it Maple} that all principal minors of size three or less of $-\tilde J$ are positive. On the other hand, $\det (-\tilde{J} ({\vv k},{\vv x}))$ is not immediately positive, except when 
\begin{equation}\label{se-ineq}
    m-1\leq \frac{k_5}{k_4} .
\end{equation}
Therefore by Corollary~\ref{cor:main}, the sequestration network $K_{m,4} (\tau)$ is delay stable if the inequality (\ref{se-ineq}) is satisfied. 

Note that (\ref{se-ineq}) is always satisfied if $m=1$. In fact, $K_{1,n}(\tau)$ is delay stable for any $n$. This is an immediate application of the graph-theoretical result in our upcoming paper~\cite{our_follow_up_paper_SR_graph}.
%or for fixed $m>0$ where  $k_5$  is much larger than $k_4$.
\end{example}

\begin{example}
\label{ex:notdelaystable}
We conclude with an example that is not delay stable~\cite{mr07}: the detoxification of nitric oxide in bacteria. A toxin $A_1$ leaks into the cell but is degraded by an enzyme $A_2$ (with intermediate complex $A_3$). The enzyme $A_2$ also degrades over time. Finally, in the presence of $A_1$, the gene for producing $A_2$ is promoted by an active promoter $A_5$ and by an inactive promoter $A_4$. The delay mass action system is given by  
    \begin{align*}
        0 &\to A_1 \\
        A_1 + A_2 &\rightleftharpoons A_3 \\
        A_3 &\to A_2 \\
        A_2 &\rightarrow 0 \\
        n A_1 + A_4 &\rightleftharpoons A_5 \\
        A_5  &\xrightarrow[\tau_2, \tau_5]{} A_2 + A_5 ,
    \end{align*}
where only the last reaction is delayed\footnote{As shown, this last reaction seems to violate conservation of mass. However, this simplified model does not show the amino acids needed to produce $A_2$; we assume that there is a high abundance of amino acids available.}, with the enzyme $A_2$ produced after a delay time $\tau_2$ and recovering the promoter $A_5$ after a different delay time $\tau_5$. The two delay times of the system reflect that $A_2$ and $A_5$ require different amount of time before they are able to perform their respective functions. Having two different delay times does not greatly alter the analysis; see Remark~\ref{rmk:difftau}. 

In this example, $-\tilde{J}$ fails to be a $P_0$-matrix (for example the $\{1,4,5\}$ principal minor is negative), and therefore Theorem~\ref{prop:main} does not apply. Moreover, Theorem~\ref{prop:main} also does not apply because $\det(J) = 0$, where $J$ is the Jacobian matrix of the system when $\tau_2 = \tau_5 = 0$. Indeed, the stoichiometric subspace $S$ has the span of $\vv e_4 + \vv e_5$ as its orthogonal complement. Thus, the ODE model has the conservation relation $x_4(t) + x_5(t) = c$, while the delay model has the conservation relation
    \begin{align*}
    x_4(t) + x_5(t) + \int_{t-\tau_5}^t k_8 x_5(s) \, ds = c, 
    \end{align*}
where $k_8$ is the rate constant for the last reaction and $c$ is a constant determined by the initial data (for the ODE or delay model respectively). In fact, for some choices of rate constants and delay parameters, the system is delay unstable and exhibits oscillations~\cite{mr07}. 
\end{example}

%%%%%%%%%%%%
\section{Discussion and conclusion}
\label{sec:concl}

Time delays are naturally present in many biochemical and biological processes~\cite{Macdonald1989,Mahaffy1984,Smolen2001}. In this work we have presented a criterion for a chemical reaction network with time delays to be delay stable, that is, any equilibrium is (absolutely) stable for all values of the rate constants and the delay parameters. The criterion for delay stability is algebraic in nature and depends mainly on the signs of the principal minors of what we call the modified Jacobian, a matrix constructed from the Jacobian matrix. The criterion for delay stability is applicable to reaction networks of moderate to large size, since the calculations necessary to show delay stability can be completed using standard symbolic software.

Some future research directions related to delay stability of reaction networks include:
\begin{itemize}
\item  Graph-theoretic conditions for multistationarity in reaction networks have been found in \cite{Banaji_Craciun_2009,cf06,mr07}. These conditions are closely related to the sign of the determinant of the Jacobian matrix for different parameter values. Similarly, the delay stability of reaction networks  with delays depends on the signs of the principal minors of the modified Jacobian matrix. In an upcoming work we will present  similar graph-theoretic condition for delay stability in reaction networks~\cite{our_follow_up_paper_SR_graph}.
\item We have found a sufficient condition for delay stability in reaction networks. Future work will address the question of necessary conditions for delay stability.
\item
The sequestration networks $K_{m,n}$ presented in Example~\ref{eq:seqKmn} have been analyzed extensively recently~\cite{js15,tw19}. In a subsequent work 
their delay stability will be studied for any values of $m$ and $n$.
\end{itemize}

As a final remark, our analysis focused on the stability of steady states in reaction systems with time delays. However, our main result has implications to the study of oscillations (a common dynamical behavior in systems with delay). Namely, if Theorem~\ref{prop:main} applies, then critical delay values (and therefore Hopf bifurcations) are precluded. Thus our result may serve as a necessary condition for the existence of Hopf bifurcations arising from time delays.

%%%%%%%%%%%%
\section*{Acknowledgements}
\label{sec:ack}

This work was supported by the National Science Foundation [DMS--1412643, DMS--1517577, DMS--1816238] and Natural Sciences and Engineering Research Council of Canada [PGS-D]. We thank the anonymous reviewers for their comments and suggestions.

\end{document}